\documentclass[11pt,reqno]{amsart}
\usepackage[margin=0.8in]{geometry}
\usepackage{amsmath,amssymb,amsthm}
\usepackage{microtype}
\usepackage{hyperref}
\hypersetup{hidelinks}
\newtheorem{theorem}{Theorem}[section]
\newtheorem{lemma}[theorem]{Lemma}
\newtheorem{proposition}[theorem]{Proposition}
\newtheorem{corollary}[theorem]{Corollary}
\newtheorem{introtheorem}{Theorem}

\theoremstyle{definition}

\theoremstyle{remark}
\newtheorem{remark}[theorem]{Remark}
\numberwithin{equation}{section}

\title{On Intersections of System Normalizers}
\subjclass[2020]{20D10, 20D20}
\keywords{Finite solvable group, system normalizer, hypercenter, Frattini subgroup, metabelian group}

\author{Ting Gong}
\address{Department of Mathematics, University of Washington, Seattle, WA 98195}
\email{tgong2@uw.edu}

\author{Yuan Lu}
\address{Department of Mathematics, ETH Zürich, 8092 Zürich, Switzerland}
\email{yuan.lu@math.ethz.ch}

\author{Yong Yang}
\address{Department of Mathematics, Texas State University, San Marcos, TX 78666}
\email{yang@txstate.edu}

\author{Michael Ruofan Zeng}
\address{Department of Mathematics, University of Washington, Seattle, WA 98195}
\email{zengrf@uw.edu}
\date{}

\begin{document}
\begin{abstract}
We give a negative answer to Problem 17.39 of the Kourovka Notebook. For every odd prime power $q$ and every $m\geq1$, we construct a finite metabelian group $G_{m,q}$ with $Z_\infty(G_{m,q})=1$ and $|\Phi(G_{m,q})|=q$ for which the least number of system normalizers with trivial intersection is $m+1$. Thus the number is unbounded even when the order of the Frattini subgroup is fixed. 
\end{abstract}
\maketitle

\section{Introduction}\label{sec:introduction}

Let $G$ be a finite solvable group. Write $\Phi(G)$ for the Frattini subgroup of $G$, the intersection of its maximal subgroups, and write $Z_\infty(G)$ for the hypercenter of $G$. The intersection of all system normalizers of $G$ is $Z_\infty(G)$; see, for example, \cite{DoerkHawkes}. Problem 17.39 of the Kourovka Notebook \cite{Kourovka20} asks whether there exists a positive integer $n$, independent of $G$, such that $Z_\infty(G)$ is the intersection of $n$ system normalizers. Ballester-Bolinches, Cossey, Kamornikov, and Meng proved that if $\Phi(G)=1$, then four conjugate system normalizers suffice; under additional hypotheses, three suffice \cite{BCKM}. We show that no uniform bound exists in general, even for metabelian groups. In the groups used below all system normalizers are conjugate, so the same examples also rule out a bound when one restricts to conjugate system normalizers.

\begin{introtheorem}\label{thm:A}
For every odd prime power $q$ and every integer $m\geq1$, there exists a finite metabelian group $G_{m,q}$ such that
\[
 |G_{m,q}|=2q^{2m+1},\qquad
 Z_\infty(G_{m,q})=1,\qquad |\Phi(G_{m,q})|=q,
\]
and the least number of system normalizers whose intersection is the hypercenter is $m+1$. For the Frattini quotient $G_{m,q}/\Phi(G_{m,q})$, the least number is two.
\end{introtheorem}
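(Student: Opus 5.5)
We would take $F=\mathbb{F}_q$ and build $G_{m,q}=N\rtimes\langle t\rangle$, where $N$ is the Heisenberg group over $F$ of order $q^{2m+1}$. Concretely, $N=\{(u,w,z):u\in U,\ w\in W,\ z\in Z\}$ with $U=W=F^m$, $Z=F$, and multiplication
\[
(u,w,z)(u',w',z')=(u+u',\,w+w',\,z+z'+\beta(w,u'))
\]
for a nondegenerate $F$-bilinear pairing $\beta\colon W\times U\to F$. The involution $t$ acts by $(u,w,z)\mapsto(-u,w,-z)$; this is an automorphism because $\beta(w,-u')=-\beta(w,u')$. Then $|G_{m,q}|=2q^{2m+1}$ is immediate. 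Since $q$ is odd, $[N,t]$ is generated modulo $Z$ by $U$ (because $-1-1=-2$ is invertible on $U$). Hence $G'=UZ$, which is abelian, so $G$ is metabelian.

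\textbf{Step 1: Frattini subgroup and hypercenter.} As $N\trianglelefteq G$, we have $\Phi(G)\supseteq\Phi(N)=N'N^p=Z$. Conversely $G/Z\cong W\times(U\rtimes\langle t\rangle)$, where $t$ inverts $U$. A generalized dihedral group of odd-order abelian elementary base has trivial Frattini subgroup, and $W$ is elementary abelian, so $\Phi(G/Z)=1$ and $\Phi(G)=Z$ has order $q$. For the hypercenter it suffices to show $Z(G)=1$. A central element commutes with $t$, so it lies in $C_G(t)=W\langle t\rangle$ (here $C_N(t)=\{(0,w,0)\}=W$). No nontrivial $w$ or $wt$ is central: $w$ fails to commute with some $u\in U$ by nondegeneracy, and $wt$ fails to commute with $U$ since $t$ inverts $U$.

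\textbf{Step 2: system normalizers.} A Sylow system is $\{N,\langle t\rangle^g\}$. Since $N$ is normal, its normalizer is $N_G(\langle t^g\rangle)=C_G(t^g)=(W\langle t\rangle)^g$, and all system normalizers are conjugate. Conjugates by $G=N\langle t\rangle$ reduce to conjugates by $n=(u,0,0)\in U$, modulo elements of $W$ and $t$, which normalize $W\langle t\rangle$. Computing $(0,w,0)^{n}$ gives the graph $\{(0,w,\beta(w,u))\}$ of the functional $f_u=\beta(\cdot,u)$, and $t^{n}$ is a $U$-translate of $t$. Given $u_1,\dots,u_k$, the $N$-part of $\bigcap_i D^{n_i}$ consists of those $w$ with $f_{u_i-u_1}(w)=0$ for all $i$. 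Each such equation cuts out a hyperplane, so this part is trivial iff the $u_i-u_1$ span $U$, which needs $k-1\ge m$, i.e. $k\ge m+1$. Choosing $u_1=0$ and $u_2,\dots,u_{m+1}$ a basis achieves it.

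\textbf{Step 3: coset elements outside $N$.} We must also check that no element $x\,t'$ outside $N$ lies in all the chosen conjugates. This is the step I expect to need the most care. An element outside $N$ in $D^{n_i}$ has the form $d\,t^{n_i}$, and $t^{n_i}=(2u_i\text{-type translate})\,t$ differ in their $U$-components. Comparing $U$-coordinates modulo $ZW$ forces $u_i=u_j$, using that $2$ is invertible. So already two distinct conjugates meet inside $N$, and the previous count applies. The lower bound $m+1$ follows because with $\le m$ conjugates the $U$-components $u_i-u_1$ span a proper subspace, leaving a nonzero $w$ in the intersection. Here we use that conjugation by the $W\langle t\rangle$-part does not change the relevant functional, up to sign.

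\textbf{Step 4: the Frattini quotient.} In $\bar G=G/Z=W\times(U\rtimes\langle t\rangle)$ we have $Z_\infty(\bar G)=W$. The system normalizers are $W\times\langle t\rangle^{\bar u}$, and two of them with distinct $\bar u$ already intersect in $W$, while one does not suffice. So the least number for the quotient is two.
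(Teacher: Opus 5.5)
Your group is exactly the paper's $G_{m,q}$ in Heisenberg coordinates. Your $U$, $W$, $Z$ are the paper's translations by $\mathsf X$, its shear group $U$, and $L$, and your $D=W\langle t\rangle$ is its $H$. The strategy is the same: system normalizers are the conjugates of $C_G(t)$, and their intersections are controlled by linear functionals. However, Step 2 contains a false reduction.

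You claim every conjugate of $D$ is $D^{n}$ with $n=(u,0,0)\in U$, because the remaining factors lie in $W\langle t\rangle$. But $G=D\cdot UZ$, not $D\cdot U$; the latter has only $2q^{2m}$ elements. Moreover, $Z$ does not normalize $D$: since $t$ inverts $Z$, conjugating $t$ by $(0,0,z)$ gives $(0,0,\pm2z)t\notin D$ for $z\neq0$. So there are $q^{m+1}$ system normalizers $D^{(u,0,z)}$, as in Lemma~\ref{lem:affine}. Your lower-bound argument, as written, examines only $q^m$ of them.

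The repair is short. Because $Z$ is central in $N$,
$D^{(u,0,z)}\cap N=W^{(u,0,z)}=W^{(u,0,0)}=\{(0,w,\beta(w,u))\}$, independent of $z$. Hence your computation of the $N$-part of an intersection, and with it the lower bound $m+1$, holds for arbitrary system normalizers. The upper bound is unaffected, because you choose the conjugates yourself and distinct $u_i$ already force the intersection into $N$.

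Your remark in Step 3 that two distinct conjugates always meet inside $N$ is true: $t^g$ is the unique involution of $D^g$, and any element of $D^g\setminus N$ has an odd power equal to $t^g$. But your $U$-coordinate comparison does not prove it when the $u_i$ coincide and the $z_i$ differ. Fortunately, you do not need it. The paper avoids this pitfall by treating $G$ as an affine group on $\mathsf V$ and identifying system normalizers with point stabilizers. That makes the parametrization by all of $\mathsf V$, which is your $UZ$, automatic.

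Your Frattini computation takes a different and shorter route than the paper's. You use $Z=\Phi(N)=N'N^p\le\Phi(G)$ together with $\Phi(G/Z)=1$ via $G/Z\cong W\times(U\rtimes\langle t\rangle)$. The paper instead uses a maximal-subgroup argument based on $L=P'\le Z(P)$ and exhibits explicit maximal subgroups of $Q_{m,q}$. Your route costs only quoting the standard facts $\Phi(N)\le\Phi(G)$ for $N\trianglelefteq G$ and $\Phi(A\times B)=\Phi(A)\times\Phi(B)$.
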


\begin{corollary}\label{cor:unbounded}
For every odd prime power $q$, there is no bound on the number of system normalizers needed to express the hypercenter as their intersection among finite metabelian groups $G$ with $|\Phi(G)|=q$. Consequently, there is no absolute bound for finite solvable groups, and no such bound can depend only on $|\Phi(G)|$.
\end{corollary}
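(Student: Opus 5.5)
The corollary follows directly from Theorem~\ref{thm:A}. We fix an odd prime power $q$ and suppose, for a contradiction, that some integer $n$ works for all finite metabelian groups $G$ with $|\Phi(G)|=q$. This would mean that $Z_\infty(G)$ is always the intersection of at most $n$ system normalizers of $G$.

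We then choose $m=n$ and apply Theorem~\ref{thm:A}. This produces a metabelian group $G_{n,q}$ with $|\Phi(G_{n,q})|=q$ and $Z_\infty(G_{n,q})=1$. For this group the least number of system normalizers with intersection $Z_\infty(G_{n,q})$ is $n+1>n$. One remark is needed: if at most $n$ system normalizers intersect in $Z_\infty$, then exactly $n$ do, since we may repeat a normalizer and the intersection does not change. So "at most $n$" and "exactly $n$" are equivalent here. This contradicts the assumed bound $n$, which gives the first sentence of the corollary.

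For the second sentence, the metabelian groups $G_{m,q}$ are finite solvable groups. An absolute bound $N$ for all finite solvable groups would therefore bound the least number for every $G_{m,q}$. Taking $m=N$ gives a contradiction as above.

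Now suppose a bound $f(|\Phi(G)|)$ existed, depending only on the order of the Frattini subgroup. For a single fixed odd prime power $q$, all the groups $G_{m,q}$ have $|\Phi(G_{m,q})|=q$, so they would all satisfy the uniform bound $f(q)$. Taking $m=f(q)$ gives the same contradiction.

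There is no real obstacle here: the argument only instantiates Theorem~\ref{thm:A} with $m$ larger than the putative bound. The one point needing care is that repetition handles the "at most" versus "exactly" convention.
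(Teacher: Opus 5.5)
Your proof is correct and matches the paper's: fix $q$, take $m$ equal to the putative bound $n$, and invoke Theorem~\ref{thm:A} to get a metabelian $G_{n,q}$ with $|\Phi(G_{n,q})|=q$ that needs $n+1$ system normalizers; the other two assertions then follow at once. Your remark about repeating a normalizer to pass between ``at most $n$'' and ``exactly $n$'' is a harmless detail the paper leaves implicit, as is choosing $m\geq1$, which Theorem~\ref{thm:A} requires.
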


The groups in Theorem~\ref{thm:A} form a two-parameter family. Fixing $q$ keeps $|\Phi(G_{m,q})|=q$ while the required number of system normalizers tends to infinity with $m$. We construct $G_{m,q}$ as a semidirect product $\mathsf V\rtimes H$. The subgroup $H$ is a system normalizer and every system normalizer is conjugate to $H$. We determine the intersections of these conjugates, then compute $\Phi(G_{m,q})$ and the system normalizers of the Frattini quotient.

\section{Background}\label{sec:background}

For a finite group \(G\) and a subset \(\pi\) of the prime divisors of \(|G|\), a Hall \(\pi\)-subgroup has order divisible only by primes in \(\pi\). Its index is divisible only by primes outside \(\pi\). A Hall system \(\mathcal H\) consists of one Hall \(\pi\)-subgroup for each such subset, with every two members permuting as subgroups. For a subgroup \(K\), write \(\operatorname{N}_G(K)=\{g\in G:g^{-1}Kg=K\}\) for its normalizer. The system normalizer of \(\mathcal H\) is
\[
 D(\mathcal H)=\bigcap_{K\in\mathcal H}\operatorname{N}_G(K).
\]

\begin{lemma}\label{lem:hall}
Let $p$ be an odd prime. If \(r\geq1\) and \(G\) is a group of order \(2p^r\) with a normal subgroup \(P\) of order \(p^r\), then its system normalizers are \(\operatorname{N}_G(T)\), where \(T\) ranges over the subgroups of order two in \(G\).
\end{lemma}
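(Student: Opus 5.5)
The plan is to describe the Hall systems of $G$ explicitly and then read off the system normalizers. Since $|G|=2p^r$, the only prime divisors are $2$ and $p$. A Hall system therefore consists of $1$, $G$, a Hall $p$-subgroup and a Hall $2$-subgroup. The Hall $p$-subgroup must be $P$, because $P$ is normal of order $p^r$ and so is the unique Sylow $p$-subgroup. The Hall $2$-subgroup is a subgroup $T$ of order two. Any $T$ of order two permutes with $P$, since $P$ is normal and $PT=TP=G$. Also $1$ and $G$ permute with everything. So the Hall systems are exactly $\mathcal H_T=\{1,P,T,G\}$ as $T$ ranges over the subgroups of order two, and distinct $T$ give distinct Hall systems.

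Next I compute
\[
D(\mathcal H_T)=\operatorname{N}_G(1)\cap\operatorname{N}_G(P)\cap\operatorname{N}_G(T)\cap\operatorname{N}_G(G).
\]
The subgroups $1$, $P$ and $G$ are all normal, so their normalizers equal $G$. Hence $D(\mathcal H_T)=\operatorname{N}_G(T)$, and as $T$ varies these are precisely the system normalizers. That proves the lemma.

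One convention point needs care. If the paper's definition of a Hall system also lists Hall $\pi$-subgroups for sets $\pi$ containing primes not dividing $|G|$, those subgroups coincide with the ones already listed, so nothing changes. Likewise, whether $\pi=\emptyset$ and the full prime set are included makes no difference, since they contribute the normal subgroups $1$ and $G$.

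I expect no real obstacle here; the only thing to check is this bookkeeping about what a Hall system contains. For completeness I would also note that every subgroup of order two is a Sylow $2$-subgroup, so all such $T$ are conjugate by Sylow's theorem. Consequently all system normalizers are conjugate, which is consistent with what the paper uses later.
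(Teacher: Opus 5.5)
Your proposal is correct and follows the paper's proof essentially step for step: every Hall system is $\{1,T,P,G\}$ with $T$ of order two, permutability follows from $PT=TP=G$, and $D(\mathcal H_T)=\operatorname{N}_G(T)$ because $1$, $P$, $G$ are normal. Your extra remark that each such $T$ is a Sylow $2$-subgroup, so all system normalizers are conjugate, is correct but not needed for the lemma.
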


\begin{proof}
The prime divisors of $|G|$ are $2$ and $p$, so a Hall system has one Hall subgroup for each of the four subsets
\[
 \varnothing,\qquad \{2\},\qquad \{p\},\qquad \{2,p\}.
\]
The Hall $\varnothing$-subgroup is $1$ and the Hall $\{2,p\}$-subgroup is $G$. Since $P$ is normal of index two, it is the unique Hall $\{p\}$-subgroup. A Hall $\{2\}$-subgroup has order two, so it is a subgroup $T$ of order two. Thus every Hall system has the form
\[
 \{1,T,P,G\}.
\]
Conversely, for every subgroup $T$ of order two, these four subgroups form a Hall system. Since $P\lhd G$ and $P\cap T=1$, we have $PT=TP=G$. The remaining permutability relations are immediate. Hence
\[
 D(\mathcal H)=N_G(1)\cap N_G(T)\cap N_G(P)\cap N_G(G)=N_G(T),
\]
because $N_G(P)=G$. This proves the assertion.
\end{proof}

The upper central series is defined by \(Z_0(G)=1\) and
\[
 Z_{i+1}(G)/Z_i(G)=Z\bigl(G/Z_i(G)\bigr)
 \qquad(i\geq0),
\]
where \(Z(G)\) is the center. For finite \(G\), the ascending series stabilizes at \(Z_\infty(G)\). If \(Z(G)=1\), then induction gives \(Z_i(G)=1\) for every \(i\), so \(Z_\infty(G)=1\). If \(G/Z(G)\) is centerless, then the same recursion gives \(Z_\infty(G)=Z(G)\).

\section{The groups}\label{sec:main}

\subsection{System normalizers}

Fix an odd prime power \(q=p^f\) and an integer \(m\geq1\), and put \(\mathsf X=\mathbb F_q^m\) and \(\mathsf V=\mathsf X\oplus\mathbb F_q\). For \(\mathbf b\in\mathsf X\), define
\begin{equation}\label{eq:shear}
 \begin{aligned}[b]
 u_{\mathbf b}(\mathbf x,z)&=(\mathbf x,z+\mathbf b\cdot\mathbf x),\\
 \mathbf b\cdot\mathbf x&=\sum_{j=1}^m b_jx_j
 \end{aligned}.
\end{equation}
The standard basis vectors of \(\mathsf X\) are denoted by \(\mathbf e_1,\ldots,\mathbf e_m\). Composition gives \(u_{\mathbf b}u_{\mathbf c}=u_{\mathbf b+\mathbf c}\). Evaluation at the vectors \((\mathbf e_j,0)\) shows that \(u_{\mathbf b}=1\) only when \(\mathbf b=\mathbf0\). Thus \(U=\{u_{\mathbf b}:\mathbf b\in\mathsf X\}\) is elementary abelian of order \(q^m\). Let \(t\) be scalar inversion on \(\mathsf V\), so \(t(\mathbf v)=-\mathbf v\). This transformation centralizes \(U\), has order two, and does not belong to \(U\). We define
\begin{equation}\label{eq:group}
 H=U\times\langle t\rangle,\qquad
 G_{m,q}=\mathsf V\rtimes H,\qquad P=\mathsf V\rtimes U.
\end{equation}
The group operation and the action on $\mathsf V$ are
\begin{equation}\label{eq:affine}
 \begin{aligned}[b]
 (\mathbf v,h)(\mathbf w,h')&=(\mathbf v+h(\mathbf w),hh'),\\
 (\mathbf v,h)\cdot\mathbf a&=\mathbf v+h(\mathbf a)
 \end{aligned}.
\end{equation}
The translation by \(\mathbf v\) is \(\tau_{\mathbf v}=(\mathbf v,1)\). We identify \(H\) with the subgroup of elements having translation part zero. For $s\in\{1,-1\}$, write $sI_{\mathsf V}$ for scalar multiplication by $s$ on $\mathsf V$; thus every element of $H$ has a unique form $sI_{\mathsf V}u_{\mathbf b}$. Both $\mathsf V$ and $H$ are abelian, and $P$ is the inverse image of $U$ under the projection to $H$.

\begin{lemma}\label{lem:affine}
For every odd prime power \(q\) and every integer \(m\geq1\), the group \(G_{m,q}\) defined by (\ref{eq:shear})--(\ref{eq:affine}) is metabelian of order \(2q^{2m+1}\), has \(Z_\infty(G_{m,q})=1\), and has exactly \(q^{m+1}\) system normalizers. They are the conjugates of \(H\).
\end{lemma}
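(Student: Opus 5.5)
We verify the four assertions in turn, using Lemma~\ref{lem:hall} for the system normalizers. \emph{Order and metabelian.} Since $|\mathsf V|=q^{m+1}$, $|U|=q^m$ and $|H|=2q^m$, we get $|G_{m,q}|=2q^{2m+1}$. The normal subgroup $\mathsf V$ is abelian with abelian quotient $G_{m,q}/\mathsf V\cong H$, so $G_{m,q}$ is metabelian. Also $P=\mathsf V\rtimes U$ is normal of index two and order $p^{f(2m+1)}$, since it is the preimage of the index-two subgroup $U$ of the abelian group $H$. Hence Lemma~\ref{lem:hall} applies with $p$ the characteristic and $r=f(2m+1)$: the system normalizers are the groups $\operatorname N_G(T)$ for the subgroups $T$ of order two.

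\emph{Involutions.} An element $(\mathbf v,h)$ lies outside $P$ exactly when $h=-u_{\mathbf b}$. Its square is $(\mathbf v-u_{\mathbf b}\mathbf v,\,u_{\mathbf b}^2)$, so it has order two only if $u_{2\mathbf b}=1$, that is, $\mathbf b=\mathbf 0$ because $q$ is odd. Conversely every $(\mathbf v,t)$ is an involution. Thus the involutions are precisely the elements $(\mathbf v,t)=\tau_{\mathbf v/2}\,t\,\tau_{\mathbf v/2}^{-1}$. There are $q^{m+1}$ of them, all conjugate to $t$ under translations.

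\emph{Normalizers.} Since $T$ has order two, $\operatorname N_G(T)=\operatorname C_G(T)$. Compute $\operatorname C_G(t)$: we have $(\mathbf w,h)t=(\mathbf w,ht)$ and $t(\mathbf w,h)=(-\mathbf w,th)$. Since $t$ is central in $H$, these agree iff $\mathbf w=-\mathbf w$, i.e.\ $\mathbf w=0$. So $\operatorname C_G(t)=H$. Conjugating, $\operatorname N_G(\langle(\mathbf v,t)\rangle)=\tau_{\mathbf v/2}H\tau_{\mathbf v/2}^{-1}$. Distinct involutions give distinct centralizers, because an involution is the unique involution in the center of its centralizer; indeed $Z(H)=H$ contains only $t$ as an involution, by the computation above restricted to $\mathbf v=0$. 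Hence there are exactly $q^{m+1}$ system normalizers, and they are the conjugates of $H$. Equivalently, $\operatorname N_G(H)=H$, since $\mathbf w$ normalizing $H$ forces $(\mathbf w-h\mathbf w,1)\in H$ for all $h$, and $h=t$ gives $\mathbf w=0$.

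\emph{Hypercenter.} It suffices to show $Z(G_{m,q})=1$, by the remark in Section~\ref{sec:background}. A central element centralizes $t$, so it lies in $H$. An element $h=sI_{\mathsf V}u_{\mathbf b}\in H$ commutes with every translation $\tau_{\mathbf w}$ iff $h\mathbf w=\mathbf w$ for all $\mathbf w$, i.e.\ $h=1$. Therefore $Z(G_{m,q})=1$ and $Z_\infty(G_{m,q})=1$.

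The only delicate step is the classification of involutions. It needs $q$ odd both to exclude $\mathbf b\neq\mathbf 0$ and to divide $\mathbf v$ by $2$. The counting of distinct normalizers then follows from self-normalization of $H$.
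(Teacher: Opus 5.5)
Your proof is correct and follows essentially the paper's route: Lemma~\ref{lem:hall} applied via the normal Sylow $p$-subgroup $P$, classification of the involutions as the translation-conjugates $(\mathbf v,t)$ of $t$, the computation $C_G(t)=H$, and the center computed from commutation with translations and with $t$. The only difference is in counting the conjugates of $H$. You use that each centralizer contains a unique involution (equivalently, $\operatorname{N}_G(H)=H$), whereas the paper observes that $\tau_{\mathbf a}H\tau_{\mathbf a}^{-1}$ fixes exactly the vector $\mathbf a$.
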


\begin{proof}
The quotient \(G_{m,q}/\mathsf V\cong H\) is abelian, so the commutator subgroup \(G_{m,q}'\) is contained in the abelian group \(\mathsf V\). Thus \(G_{m,q}\) is metabelian. Its order is \(|\mathsf V||H|=2q^{2m+1}\), and \(P\) is a normal Sylow \(p\)-subgroup.

The normalizer of \(T=\langle(\mathbf0,t)\rangle\) equals its centralizer, since an order-two group has only the identity automorphism. For \((\mathbf v,h)\in G_{m,q}\), the two products are
\[
 \begin{aligned}[b]
 (\mathbf v,h)(\mathbf0,t)&=(\mathbf v,ht),\\
 (\mathbf0,t)(\mathbf v,h)&=(-\mathbf v,th)
 \end{aligned}.
\]
Since \(t\) centralizes \(H\), equality of the products is equivalent to \(\mathbf v=-\mathbf v\). Since $q$ is odd, the translation part must be zero. Consequently, the normalizer of \(T\) is exactly \(H\).

Every involution in \(G_{m,q}\) has linear part \(t\). Indeed, its image in $H$ has order dividing two, and the only elements of $H$ with this property are $1$ and $t$. An element with linear part $1$ is a translation and has order $1$ or $p$, so a nonidentity involution cannot have linear part $1$. Conversely,
\[
 (\mathbf v,t)^2=(\mathbf v+t(\mathbf v),1)=1,
\]
so every $(\mathbf v,t)$ is an involution, and
\begin{equation}\label{eq:involution}
 \tau_{\mathbf w}(\mathbf0,t)\tau_{\mathbf w}^{-1}
       =(2\mathbf w,t).
\end{equation}
Since $q$ is odd, multiplication by two is a bijection on \(\mathsf V\). Hence (\ref{eq:involution}) makes every subgroup of order two a conjugate of \(T\) by a translation. By Lemma~\ref{lem:hall}, every system normalizer is therefore a conjugate of \(H=N_{G_{m,q}}(T)\). Since $H$ fixes $\mathbf0$ and $t\in H$ fixes no nonzero vector, the common fixed-point set of $H$ on $\mathsf V$ is $\{\mathbf0\}$. Thus $\tau_{\mathbf a}H\tau_{\mathbf a}^{-1}$ fixes exactly $\mathbf a$. Two such conjugates are equal only when their fixed points are equal. Hence $H$ has exactly $|\mathsf V|=q^{m+1}$ conjugates, and these are all the system normalizers of $G_{m,q}$.

Finally, suppose that \((\mathbf v,h)\in Z(G_{m,q})\). For every $\mathbf w\in\mathsf V$,
\[
 (\mathbf v,h)(\mathbf w,1)=(\mathbf v+h(\mathbf w),h),\qquad
 (\mathbf w,1)(\mathbf v,h)=(\mathbf w+\mathbf v,h).
\]
Thus $h(\mathbf w)=\mathbf w$ for every $\mathbf w$. The action of $H$ on $\mathsf V$ is faithful: if $sI_{\mathsf V}u_{\mathbf b}$ acts trivially, its first coordinate gives $s=1$, and evaluation at $(\mathbf e_i,0)$ gives $b_i=0$ for every $i$. Hence $h=1$. Commuting with \((\mathbf0,t)\) then gives \(\mathbf v=-\mathbf v\), so \(\mathbf v=\mathbf0\). Therefore $Z(G_{m,q})=1$, and hence \(Z_\infty(G_{m,q})=1\).
\end{proof}

\begin{proposition}\label{prop:minimum}
For every odd prime power \(q\) and every integer \(m\geq1\), the least number of conjugates of \(H\) with trivial intersection is \(m+1\). Hence the least number of system normalizers of \(G_{m,q}\) with trivial intersection is \(m+1\).
\end{proposition}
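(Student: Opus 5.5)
Write $H_{\mathbf a}=\tau_{\mathbf a}H\tau_{\mathbf a}^{-1}$, the conjugate of $H$ fixing $\mathbf a\in\mathsf V$. By Lemma~\ref{lem:affine} these are exactly the system normalizers, so the second sentence of the proposition follows from the first. The plan has three steps: describe $H_{\mathbf a}$ and intersections of such conjugates explicitly, show that $m$ conjugates always have nontrivial intersection, and exhibit $m+1$ conjugates with trivial intersection.

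\emph{Explicit description.} An element of $G_{m,q}$ is $(\mathbf v,h)$. It fixes $\mathbf a$ exactly when $\mathbf v=\mathbf a-h(\mathbf a)$. I expect that $H_{\mathbf a}$ is precisely the set of elements of $G_{m,q}$ fixing $\mathbf a$. Indeed, this stabilizer is a complement to $\mathsf V$ of order $|H|$, and it contains $H_{\mathbf a}$. Hence $\bigcap_i H_{\mathbf a_i}$ consists of the $h\in H$ for which the translation parts $\mathbf a_i-h(\mathbf a_i)$ all coincide. Equivalently, $h$ fixes every difference $\mathbf a_i-\mathbf a_1$ and $\mathbf v=\mathbf a_1-h(\mathbf a_1)$. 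So
\[
\bigcap_{i=1}^k H_{\mathbf a_i}\;\cong\;\{h\in H: h(\mathbf d)=\mathbf d \text{ for all } \mathbf d\in D\},
\]
where $D$ is the span of $\mathbf a_i-\mathbf a_1$ for $i=2,\dots,k$. This is the pointwise stabilizer in $H$ of a subspace of dimension at most $k-1$.

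\emph{Lower bound.} We must show that for $k\le m$, some nonidentity $h\in H$ fixes a given subspace $D$ of dimension at most $m-1$. Take $h=u_{\mathbf b}$. It fixes $(\mathbf x,z)$ exactly when $\mathbf b\cdot\mathbf x=0$. Let $\pi(D)\subseteq\mathsf X$ be the projection of $D$. It has dimension at most $m-1<m$, so there is a nonzero $\mathbf b$ orthogonal to $\pi(D)$. The corresponding $u_{\mathbf b}\neq1$ lies in the intersection, so the intersection is nontrivial.

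\emph{Upper bound.} Choose $\mathbf a_1=\mathbf0$ and $\mathbf a_{i+1}=(\mathbf e_i,0)$ for $i=1,\dots,m$. Then $D$ is spanned by the $(\mathbf e_i,0)$. Suppose $h=sI_{\mathsf V}u_{\mathbf b}$ fixes $D$ pointwise. Then $h(\mathbf e_i,0)=s(\mathbf e_i,b_i)=(\mathbf e_i,0)$, which forces $s=1$ and every $b_i=0$; here we use $m\ge1$ and $q$ odd. So $h=1$ and the intersection of these $m+1$ conjugates is trivial.

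The only real subtlety is the identification of $H_{\mathbf a}$ with the full stabilizer of $\mathbf a$, which makes the intersection formula clean. This follows from an order count: the stabilizer of $\mathbf a$ meets $\mathsf V$ trivially and has order $|H|$.
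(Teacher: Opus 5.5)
Your proposal is correct and follows essentially the same route as the paper. The paper normalizes to $\mathbf a_1=\mathbf0$ by conjugating with $\tau_{-\mathbf a_1}$ and notes that an element of $H$ lies in $\tau_{\mathbf a}H\tau_{\mathbf a}^{-1}$ exactly when it fixes $\mathbf a$; this matches your reduction to the pointwise stabilizer of the span of the differences $\mathbf a_i-\mathbf a_1$. Your lower bound (a nonzero $\mathbf b$ orthogonal to the $\mathsf X$-components) and your upper bound (the points $(\mathbf e_i,0)$) are the same as the paper's.
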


\begin{proof}
Let \(1\leq k\leq m\), and let \(D_1,\ldots,D_k\) be system normalizers. By Lemma~\ref{lem:affine}, there are vectors \(\mathbf a_i\in\mathsf V\) such that
\[
 D_i=\tau_{\mathbf a_i}H\tau_{\mathbf a_i}^{-1}
 \qquad(1\leq i\leq k).
\]
Conjugating the intersection by \(\tau_{-\mathbf a_1}\), we may assume \(D_1=H\) and \(\mathbf a_1=\mathbf0\). Write \(\mathbf a_i=(\mathbf x_i,z_i)\) for \(2\leq i\leq k\).

For \(u_{\mathbf b}\in U\leq H\), we have \(u_{\mathbf b}\in D_i\) if and only if \(u_{\mathbf b}\) fixes \(\mathbf a_i\). Hence \(u_{\mathbf b}\in D_1\cap\cdots\cap D_k\) if and only if
\begin{equation}\label{eq:fixed}
 \mathbf b\cdot\mathbf x_i=0\qquad(2\leq i\leq k).
\end{equation}
These equations define the kernel of
\[
 \begin{aligned}[b]
 \mathsf X&\longrightarrow\mathbb F_q^{k-1},\\
 \mathbf b&\longmapsto
 (\mathbf b\cdot\mathbf x_2,\ldots,\mathbf b\cdot\mathbf x_k).
 \end{aligned}
\]
The kernel has dimension at least \(m-k+1>0\). Choose \(0\ne\mathbf b\) in the kernel. Then \(u_{\mathbf b}\ne1\) and \(u_{\mathbf b}\in D_1\cap\cdots\cap D_k\). Thus no \(m\) system normalizers have trivial intersection.

For the upper bound, put \(\mathbf a_i=(\mathbf e_i,0)\) and consider the \(m+1\) conjugates
\[
 H,\quad \tau_{\mathbf a_1}H\tau_{\mathbf a_1}^{-1},\ldots,
 \tau_{\mathbf a_m}H\tau_{\mathbf a_m}^{-1}.
\]
An element in their intersection lies in \(H\), so it has the form \(sI_{\mathsf V}u_{\mathbf b}\), where \(s\in\{1,-1\}\), and it fixes every \(\mathbf a_i\). Now
\[
 sI_{\mathsf V}u_{\mathbf b}(\mathbf e_i,0)=(s\mathbf e_i,sb_i).
\]
Fixing \(\mathbf e_1,0\) gives \(s=1\), and then fixing all \(\mathbf e_i,0\) gives \(b_i=0\) for every \(i\). Hence these \(m+1\) conjugates of \(H\) have trivial intersection.
\end{proof}

\subsection{The Frattini subgroup and the quotient}

Define the translation subgroup \(L=\{\tau_{(\mathbf0,z)}:z\in\mathbb F_q\}\). Each $u_{\mathbf b}$ fixes every vector $(\mathbf0,z)$, while $t$ sends $(\mathbf0,z)$ to $(\mathbf0,-z)$. Hence $H$ normalizes $L$. Translations centralize $L$, so $L\lhd G_{m,q}$. Moreover, $U$ centralizes $L$, and therefore $L\leq Z(P)$ for $P=\mathsf V\rtimes U$. In particular, $|L|=q$.

\begin{lemma}\label{lem:commutator}
For the group \(G_{m,q}\) in (\ref{eq:group}), with \(m\geq1\), the subgroup \(L=\{\tau_{(\mathbf0,z)}:z\in\mathbb F_q\}\) satisfies \(P'=L\leq\Phi(G_{m,q})\), where \(P=\mathsf V\rtimes U\).
\end{lemma}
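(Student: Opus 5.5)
The proof has two parts: first compute $P'$ by hand, then place it inside $\Phi(G_{m,q})$ using that the Frattini subgroup of a normal subgroup lies in that of the whole group.

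\emph{Step 1: $P'=L$.} Since $\mathsf V$ and $U$ are abelian, $P/\mathsf V\cong U$ is abelian. Hence $P'$ is generated, as a normal subgroup, by commutators $[\tau_{\mathbf w},u_{\mathbf b}]$ with $\mathbf w=(\mathbf x,z)\in\mathsf V$. Using (\ref{eq:affine}), conjugating a translation by $u_{\mathbf b}$ gives $u_{\mathbf b}\tau_{\mathbf w}u_{\mathbf b}^{-1}=\tau_{u_{\mathbf b}(\mathbf w)}$. Therefore the commutator is the translation by
\[
 \pm\bigl(u_{\mathbf b}(\mathbf w)-\mathbf w\bigr)=\pm(\mathbf0,\mathbf b\cdot\mathbf x).
\]
All of these lie in $L$, so $P'\le L$, because $L$ is a subgroup and is normal. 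Conversely, take $\mathbf b=\mathbf e_1$ and $\mathbf w=(z\mathbf e_1,0)$. The commutator is then $\tau_{(\mathbf0,\pm z)}$, and $z$ ranges over all of $\mathbb F_q$. This gives $L\le P'$, and hence $P'=L$.

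\emph{Step 2: $L\le\Phi(G_{m,q})$.} Here $P$ is a $p$-group, so $P'\le\Phi(P)$, which follows from the standard fact that $P/\Phi(P)$ is elementary abelian. Moreover $P\lhd G_{m,q}$, being the normal Sylow $p$-subgroup by Lemma~\ref{lem:affine}. By the classical fact that $\Phi(N)\le\Phi(G)$ for every normal subgroup $N$ of $G$, we get $L=P'\le\Phi(P)\le\Phi(G_{m,q})$.

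If a self-contained argument is preferred to quoting that classical fact, I would argue directly. Suppose $M$ is a maximal subgroup of $G_{m,q}$ not containing $L$. Since $L\lhd G_{m,q}$, maximality gives $G_{m,q}=LM$. Then $P=L(P\cap M)$ by Dedekind's law. Now $P\cap M\lhd M$, and $P\cap M$ is also normalized by $L$, because $P\cap M$ contains $P'=L$ modulo nothing: more precisely, $L\le Z(P)$, so $L$ centralizes $P\cap M$. Hence $P\cap M\lhd G_{m,q}$. Since $P/(P\cap M)\cong L/(L\cap M)$ is abelian, $P'\le P\cap M\le M$. This gives $L\le M$, a contradiction.

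The only step needing care is the sign and direction conventions in the commutator computation of Step 1. The main conceptual point is the inclusion $\Phi(P)\le\Phi(G)$ for normal $P$, and I expect it to be the main obstacle only insofar as it must be justified; the direct argument above avoids citing it and relies on $L\le Z(P)$, which is noted just before the lemma.
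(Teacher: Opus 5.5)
Your proof is correct. Step 1 is the paper's commutator computation. The paper gets $P'\le L$ by observing that $[\mathsf V,U]\le L$ makes $P/L$ abelian, which is the same point as your normal-closure remark. Note that this remark is justified by $P=\langle\mathsf V,U\rangle$ with both factors abelian, not by $P/\mathsf V$ being abelian.

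The real difference is in Step 2. Your main route quotes two standard facts: $P'\le\Phi(P)$ for a $p$-group, and $\Phi(N)\le\Phi(G)$ for $N\lhd G$. The paper argues directly instead. If a maximal subgroup $M$ misses $L$, then $G_{m,q}=ML$, and Dedekind gives $P=(P\cap M)L$. Since $L\le Z(P)$, this yields $L=P'=((P\cap M)L)'=(P\cap M)'\le M$. Your fallback argument is essentially this one; it finishes with $P/(P\cap M)\cong L/(L\cap M)$ being abelian rather than computing the derived subgroup.

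The citation route is shorter and more general. It shows that the derived subgroup of any normal $p$-subgroup (indeed any normal nilpotent subgroup) lies in $\Phi(G)$, without using that $L$ is central. The paper's route is self-contained, at the cost of relying on the class-two structure $P'=L\le Z(P)$.

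One editorial point: delete the clause ``because $P\cap M$ contains $P'=L$ modulo nothing'' in your fallback. As written it reads as assuming the conclusion. The reason that follows it, $L\le Z(P)$, is the correct one.
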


\begin{proof}
The action gives
\begin{equation}\label{eq:commutator}
 u_{\mathbf b}\tau_{(\mathbf x,z)}u_{\mathbf b}^{-1}
       \tau_{(\mathbf x,z)}^{-1}
   =\tau_{(\mathbf0,\mathbf b\cdot\mathbf x)}.
\end{equation}
Conjugating a translation by a linear transformation replaces its vector by its image. Subtracting \((\mathbf x,z)\) from its image under \(u_{\mathbf b}\) leaves \((\mathbf0,\mathbf b\cdot\mathbf x)\). The dot products exhaust \(\mathbb F_q\), since \(m\geq1\), so \(L\leq P'\). Equation~(\ref{eq:commutator}) shows that $[\mathsf V,U]\leq L$. Since both $\mathsf V$ and $U$ are abelian, it follows that $P/L$ is abelian, and hence $P'\leq L$.

Suppose, to the contrary, that a maximal subgroup $M$ of $G_{m,q}$ does not contain $L$. Since $L\lhd G_{m,q}$, maximality gives $G_{m,q}=ML$. Intersecting with the normal subgroup $P$ and using $L\leq P$ gives
\[
 P=P\cap ML=(P\cap M)L.
\]
Indeed, if $p=ml\in P$ with $m\in M$ and $l\in L$, then $m=pl^{-1}\in P\cap M$. Since $L\leq Z(P)$ and $P=(P\cap M)L$, we have
\[
 L=P'=((P\cap M)L)'=(P\cap M)'\leq M,
\]
a contradiction. Every maximal subgroup contains \(L\), and therefore \(L\leq\Phi(G_{m,q})\).
\end{proof}

Put
\begin{equation}\label{eq:quotient}
 Q_{m,q}=G_{m,q}/L,\qquad
 B_{m,q}=\mathsf X\rtimes\langle-I_{\mathsf X}\rangle.
\end{equation}
Equation~\eqref{eq:commutator} gives $[\mathsf V,U]=L$. Hence the image of $U$ centralizes $\mathsf V/L$ in $Q_{m,q}$. It also centralizes the image of $t$. Thus the image of $U$ is central in $Q_{m,q}$. On the other hand,
\[
 (\mathsf V/L)\rtimes\langle tL\rangle\cong B_{m,q},
\]
since $\mathsf V/L\cong\mathsf X$ and $t$ acts by inversion. This subgroup and the image of $U$ have trivial intersection and generate $Q_{m,q}$. Therefore
\[
 Q_{m,q}\cong B_{m,q}\times U.
\]

\begin{proposition}\label{prop:frattini}
For every odd prime power \(q\) and every integer \(m\geq1\), the group \(G_{m,q}\) in (\ref{eq:group}) satisfies \(\Phi(G_{m,q})=L=\{\tau_{(\mathbf0,z)}:z\in\mathbb F_q\}\), and the quotient \(Q_{m,q}=G_{m,q}/L\) satisfies \(\Phi(Q_{m,q})=1\).
\end{proposition}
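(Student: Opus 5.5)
We already have $L\leq\Phi(G_{m,q})$ from Lemma~\ref{lem:commutator}. Since $L\leq\Phi(G_{m,q})$, the standard fact $\Phi(G/N)=\Phi(G)/N$ for $N\leq\Phi(G)$ gives $\Phi(Q_{m,q})=\Phi(G_{m,q})/L$. So both assertions reduce to proving $\Phi(Q_{m,q})=1$. For this we use the decomposition $Q_{m,q}\cong B_{m,q}\times U$ established just before the statement.

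The plan is to show that $B_{m,q}\times U$ has trivial Frattini subgroup by exhibiting enough maximal subgroups. There are two steps.

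\emph{The factor $U$.} The group $U$ is elementary abelian. For each hyperplane $W$ of $U$, regarded as an $\mathbb F_p$-space, the subgroup $B_{m,q}\times W$ has index $p$ and is therefore maximal. These hyperplanes intersect in $1$, so the intersection of these maximal subgroups is $B_{m,q}\times 1$.

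\emph{The factor $B_{m,q}$.} Here $B_{m,q}=\mathsf X\rtimes\langle -I\rangle$. For each $\mathbb F_p$-hyperplane $Y$ of $\mathsf X$, the subspace $Y$ is invariant under $-I$. Hence $(Y\rtimes\langle -I\rangle)\times U$ is a subgroup of index $p$, and so it is maximal. The intersection over all such $Y$ is $\langle -I\rangle\times U$. Also, $\mathsf X\times U$ has index $2$ and is maximal. It meets $\langle -I\rangle\times U$ in $U$.

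Combining the two steps, the intersection of all the listed maximal subgroups lies in $(B_{m,q}\times1)\cap U=1$. Hence $\Phi(Q_{m,q})=1$.

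With $\Phi(Q_{m,q})=1$, it follows that $\Phi(G_{m,q})/L=\Phi(Q_{m,q})=1$, so $\Phi(G_{m,q})=L$. If one prefers to avoid quoting $\Phi(G/N)=\Phi(G)/N$, argue directly. Maximal subgroups of $Q_{m,q}$ pull back to maximal subgroups of $G_{m,q}$ containing $L$, and these have intersection exactly $L$. This gives $\Phi(G_{m,q})\leq L$, and the reverse inclusion is Lemma~\ref{lem:commutator}.

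There is no real obstacle. The only points needing care are to check that index-$p$ (respectively index-$2$) subgroups are indeed maximal, which holds since the index is prime, and to make sure that the chosen family of maximal subgroups intersects trivially in the direct product.
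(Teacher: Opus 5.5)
Your proposal is correct and follows essentially the same route as the paper. The paper's proof uses the same three families of maximal subgroups of $B_{m,q}\times U$: the index-two subgroup $\mathsf X\times U$, and the index-$p$ subgroups from $\mathbb F_p$-hyperplanes of $\mathsf X$ and of $U$ (written there as $M_\alpha$ and $N_\alpha$, using only coordinate hyperplanes). Like you, it shows these intersect trivially, then pulls them back to $G_{m,q}$ and combines this with Lemma~\ref{lem:commutator} to get $\Phi(G_{m,q})=L$, which is exactly your alternative argument that avoids quoting $\Phi(G/N)=\Phi(G)/N$.
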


\begin{proof}
Under the isomorphism $Q_{m,q}\cong B_{m,q}\times U$, the subgroup $\mathsf X\times U$ has index two and is therefore maximal. Let $\alpha:\mathsf X\to\mathbb F_p$ be a nonzero linear functional. Since $\ker\alpha$ is invariant under $-I_{\mathsf X}$, the subgroups
\begin{equation}\label{eq:maximal}
 \begin{aligned}
 M_\alpha&=(\ker\alpha\rtimes\langle-I_{\mathsf X}\rangle)\times U,\\
 N_\alpha&=B_{m,q}\times\{u_{\mathbf b}:\alpha(\mathbf b)=0\}
 \end{aligned}
\end{equation}
are well defined. Here $\alpha$ is regarded as an $\mathbb F_p$-linear functional on the additive group of $\mathsf X$. Each subgroup has index $p$ in $Q_{m,q}$, and hence is maximal.

Choose an $\mathbb F_p$-basis of $\mathsf X$ and let $\alpha_1,\ldots,\alpha_{fm}$ be the corresponding coordinate functionals. If an element lies in $\mathsf X\times U$, then its scalar component is $1$. If it also lies in every $M_{\alpha_i}$, then its translation component belongs to $\bigcap_i\ker\alpha_i=0$. If it also lies in every $N_{\alpha_i}$, then its shear parameter belongs to $\bigcap_i\ker\alpha_i=0$. Hence
\[
 (\mathsf X\times U)\cap\bigcap_{i=1}^{fm} M_{\alpha_i}
 \cap\bigcap_{i=1}^{fm} N_{\alpha_i}=1.
\]
Since the Frattini subgroup is contained in every maximal subgroup, this gives $\Phi(Q_{m,q})=1$.

The inverse images in $G_{m,q}$ of the maximal subgroups just used are maximal and have intersection exactly $L$, because their intersection in $Q_{m,q}$ is trivial. Hence $\Phi(G_{m,q})\leq L$. Lemma~\ref{lem:commutator} gives the reverse inclusion, and therefore $\Phi(G_{m,q})=L$.
\end{proof}

\begin{proposition}\label{prop:quotient}
For every odd prime power \(q\) and every integer \(m\geq1\), the quotient \(Q_{m,q}\cong B_{m,q}\times U\) in (\ref{eq:quotient}) has hypercenter \(\{1\}\times U\) and has exactly \(q^m\) system normalizers \(J\times U\), with \(J\) ranging over the order-two subgroups of \(B_{m,q}\), for which two is the least number whose intersection equals the hypercenter.
\end{proposition}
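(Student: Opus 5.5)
The plan is to treat the factor $B_{m,q}$ by the same fixed-point arguments used for $G_{m,q}$ in Lemma~\ref{lem:affine} and Proposition~\ref{prop:minimum}, and then pass to the direct product with the abelian factor $U$. The order is: compute the hypercenter of $B_{m,q}$, transfer to the product, describe the system normalizers, and finally count how many are needed.

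\emph{Hypercenter.} First I show $Z(B_{m,q})=1$, exactly as at the end of Lemma~\ref{lem:affine}. An element $(\mathbf x,s)$ commuting with all translations must have $s=1$, since $-I_{\mathsf X}$ moves every nonzero vector. Commuting with $(\mathbf 0,-I_{\mathsf X})$ then gives $\mathbf x=-\mathbf x$, so $\mathbf x=\mathbf 0$ because $q$ is odd. For a direct product with an abelian factor, $Z(B\times U)=Z(B)\times U=\{1\}\times U$. The quotient $Q_{m,q}/(\{1\}\times U)\cong B_{m,q}$ is centerless, so the remark in Section~\ref{sec:background} gives $Z_\infty(Q_{m,q})=\{1\}\times U$.

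\emph{System normalizers.} The group $Q_{m,q}$ has order $2q^{2m}$, and $(\mathsf X\times U)$ is a normal subgroup of order $q^{2m}$. Lemma~\ref{lem:hall} therefore says that the system normalizers are $N_{Q}(T)=C_Q(T)$, where $T$ runs over the subgroups of order two. Every involution of $Q$ is an involution of $B_{m,q}$ times an element of $U$ of order dividing two. Since $|U|$ is odd, every involution lies in $B_{m,q}\times\{1\}$. As in (\ref{eq:involution}), the involutions of $B$ are the elements $(\mathbf v,-I_{\mathsf X})$, one for each $\mathbf v\in\mathsf X$, so there are $q^m$ of them. For $J=\langle(\mathbf v,-I)\rangle$ one has $C_Q(J)=C_B(J)\times U$. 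The computation in Lemma~\ref{lem:affine} shows $C_B(J)=J$: an element $(\mathbf w,s)$ commutes with $(\mathbf v,-I)$ exactly when $\mathbf w+s\mathbf v=\mathbf v-\mathbf w$, which forces $\mathbf w=\mathbf 0$ if $s=1$ and $\mathbf w=\mathbf v$ if $s=-1$. Thus the system normalizers are exactly the $q^m$ distinct subgroups $J\times U$.

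\emph{Least number.} A single system normalizer $J\times U$ properly contains $\{1\}\times U$, so one is not enough. For two distinct order-two subgroups $J_1\neq J_2$, the intersection is $(J_1\cap J_2)\times U=\{1\}\times U$. Distinct subgroups exist because $q^m\ge 3$, so two system normalizers suffice and the least number is two. None of these steps is a real obstacle. The only point needing care is that no involution has a nontrivial $U$-component, and this follows from $|U|$ being odd.
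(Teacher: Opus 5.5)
Your proposal is correct and follows the paper's proof essentially step for step: you show $B_{m,q}$ is centerless, use the odd order of $U$ to place every involution in $B_{m,q}\times\{1\}$, apply Lemma~\ref{lem:hall}, and identify $N_{Q_{m,q}}(J\times 1)=J\times U$. The only cosmetic difference is that you compute $C_{B_{m,q}}(J)=J$ directly for every involution $(\mathbf v,-I_{\mathsf X})$, whereas the paper computes it for $J_0$ and transfers the result by conjugacy.
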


\begin{proof}
In $Q_{m,q}\cong B_{m,q}\times U$, the factor $U$ is central. We first show that $B_{m,q}$ is centerless. If $(\mathbf x,sI_{\mathsf X})\in Z(B_{m,q})$, then commuting with all translations forces $s=1$. Commuting with $(\mathbf0,-I_{\mathsf X})$ then gives $\mathbf x=-\mathbf x$, and hence $\mathbf x=\mathbf0$. Thus $Z(B_{m,q})=1$, and therefore
\[
 Z(Q_{m,q})=\{1\}\times U,\qquad Q_{m,q}/Z(Q_{m,q})\cong B_{m,q}.
\]
Since the quotient by $Z(Q_{m,q})$ is centerless, the upper central series has no term strictly above the center. Hence
\[
 Z_\infty(Q_{m,q})=Z(Q_{m,q})=\{1\}\times U.
\]

Every involution in $Q_{m,q}$ has trivial $U$-component, because $U$ has odd order. The involutions of $B_{m,q}$ are the elements $(\mathbf x,-I_{\mathsf X})$, with $\mathbf x\in\mathsf X$. Translation conjugation sends $(\mathbf0,-I_{\mathsf X})$ to $(2\mathbf x,-I_{\mathsf X})$, so these $q^m$ involutions are conjugate and generate $q^m$ distinct order-two subgroups.

Let $J_0=\langle(\mathbf0,-I_{\mathsf X})\rangle$. Since $\operatorname{Aut}(J_0)=1$, its normalizer equals its centralizer. If $(\mathbf x,sI_{\mathsf X})\in B_{m,q}$, then
\[
 (\mathbf x,sI_{\mathsf X})(\mathbf0,-I_{\mathsf X})
   =(\mathbf x,-sI_{\mathsf X}),\qquad
 (\mathbf0,-I_{\mathsf X})(\mathbf x,sI_{\mathsf X})
   =(-\mathbf x,-sI_{\mathsf X}).
\]
Thus it centralizes $(\mathbf0,-I_{\mathsf X})$ only when $\mathbf x=\mathbf0$. Hence $N_{B_{m,q}}(J_0)=J_0$, and by conjugacy $N_{B_{m,q}}(J)=J$ for every order-two subgroup $J$ of $B_{m,q}$.

Now $|Q_{m,q}|=2q^{2m}$ and $\mathsf X\times U$ is a normal subgroup of order $q^{2m}=p^{2fm}$, hence the normal Sylow $p$-subgroup of $Q_{m,q}$. Lemma~\ref{lem:hall} therefore applies. The order-two subgroups of $Q_{m,q}$ are $J\times1$, and
\[
 N_{Q_{m,q}}(J\times1)=N_{B_{m,q}}(J)\times U=J\times U.
\]
Thus the system normalizers of $Q_{m,q}$ are exactly the $q^m$ subgroups $J\times U$. Two distinct ones intersect in $\{1\}\times U=Z_\infty(Q_{m,q})$, while a single one properly contains the hypercenter. Thus two system normalizers are necessary and sufficient.
\end{proof}

\begin{remark}\label{rem:m1}
For $m=1$, the group $G_{1,q}$ has order $2q^3$ and has $q^2$ system normalizers. Proposition~\ref{prop:minimum} gives minimum $2$, while Propositions~\ref{prop:frattini} and~\ref{prop:quotient} give $|\Phi(G_{1,q})|=q$ and $|Z_\infty(G_{1,q}/\Phi(G_{1,q}))|=q$. 
\end{remark}

\begin{proof}[Proof of Theorem~\ref{thm:A}]
Use the group $G_{m,q}$ in (\ref{eq:group}). Its order, metabelianity, and trivial hypercenter follow from Lemma~\ref{lem:affine}. Proposition~\ref{prop:minimum} gives the minimum $m+1$. By Proposition~\ref{prop:frattini}, $\Phi(G_{m,q})=L$ has order $q$, and Proposition~\ref{prop:quotient} shows that the minimum for $G_{m,q}/\Phi(G_{m,q})$ is two.
\end{proof}

\begin{proof}[Proof of Corollary~\ref{cor:unbounded}]
Fix an odd prime power $q$. Given $n\geq1$, take $m=n$. By Theorem~\ref{thm:A}, the group $G_{n,q}$ is metabelian, satisfies $|\Phi(G_{n,q})|=q$, and fewer than $n+1$ system normalizers cannot have intersection $Z_\infty(G_{n,q})=1$. Thus no bound exists even with $q$ fixed. The remaining assertions follow at once.
\end{proof}

\section*{Report of AI use}
The main argument was generated with the assistance of Albilich, a generative artificial-intelligence assistant for mathematical research developed by the authors. All arguments were subsequently fully understood, completely rewritten, and independently verified by the authors.

\section*{Acknowledgements}
Yang was partially supported by a grant from the Simons Foundation, grant no. 918096.

\section*{Disclosure Statement}
The authors report no conflict of interest.

\section*{Data Availability Statement}
No data were used in this research.

\end{document}